\title{The Gardner equation and the $L^2$-stability of the $N$-soliton solution of the Korteweg-de Vries equation}
\author{Miguel A. Alejo}
\author{Claudio Mu\~noz}
\author{Luis Vega}
\address{Departamento de Matem\'aticas, Facultad de Ciencia y Tecnolog\'ia, Universidad del Pa\'is Vasco, Bilbao \\  Espa\~na}
\email{miguelangel.alejo@ehu.es}
\email{Claudio.Munoz@math.uvsq.fr}
\email{luis.vega@ehu.es}
\date{January, 2011}
\subjclass[2000]{Primary 35Q51, 35Q53; Secondary 37K10, 37K40}
\keywords{KdV equation, Gardner equation, integrability, multi-soliton, stability, asymptotic stability, Miura transform}
\thanks{}
\chardef\bslash=`\\ % p. 424, TeXbook
\newtheorem{thm}{Theorem}[section]
\newtheorem{cor}[thm]{Corollary}
\newtheorem{prop}[thm]{Proposition}
\theoremstyle{definition}
\theoremstyle{remark}
\numberwithin{equation}{section}
\newcommand{\R}{\mathbb{R}}
\newcommand{\T}{\mathbb{T}}
\newcommand{\al}{\alpha}
\newcommand{\ga}{\gamma}
\def\bm{\left( \begin{array}{cc}}
\def\endm{\end{array}\right)}
\newcommand{\be}{\begin{equation}}
\newcommand{\ee}{\end{equation}}
\newcommand{\ba}{\begin{equation*}}
\newcommand{\ea}{\begin{equation*}}
\newcommand{\bea}{\begin{eqnarray}}
\newcommand{\eea}{\end{eqnarray}}
\newcommand{\bee}{\begin{eqnarray*}}
\newcommand{\eee}{\end{eqnarray*}}
\newcommand{\ben}{\begin{enumerate}}
\newcommand{\een}{\end{enumerate}}
\newcommand{\nonu}{\nonumber}
\newcommand{\eval}[2][\right]{\relax
  \ifx#1\right\relax \left.\fi#2#1\rvert}
\begin{document}
\begin{abstract}
Multi-soliton solutions of the Korteweg-de Vries equation (KdV) are shown to be globally $L^2$-stable, and asymptotically stable in the sense of Martel-Merle \cite{MMnon}. The proof is surprisingly simple and combines the Gardner transform, which links the Gardner and KdV equations, together with the Martel-Merle-Tsai and Martel-Merle recent results on stability and asymptotic stability in the energy space \cite{MMT,MMan}, applied this time to the Gardner equation. As a by-product, the results of Maddocks-Sachs \cite{MS}, and Merle-Vega \cite{MV} are improved in several directions.  
\end{abstract}
\maketitle \markboth{$L^2$-stability of multi-solitons} {Miguel A. Alejo, Claudio Mu\~noz and Luis Vega}
\renewcommand{\sectionmark}[1]{}
%\tableofcontents
%%%%%%%%%%%%%%%%%%%%%%%%%%%%%%%%%%%%%%%%%%%%%%%%%%%%%%%%%%
%%%%%%%%%%%%%%%%%%%%%%%%%%%%%%%%%%%%%%%%%%%%%%%%%%%%%%%%%%

\section{Introduction and Main results}

\medskip

In this paper we consider the nonlinear $L^2$-stability, and asymptotic stability, of the $N$-\emph{soliton} of the Korteweg-de Vries (KdV) equation
\be\label{KdV}
u_{t}  +  (u_{xx} + u^2)_x =0.
\ee
Here $u=u(t,x)$ is a real valued function, and $(t,x)\in \R^2$. This equation arises in Physics as a model of propagation of dispersive long waves, as was pointed out by Russel in 1834 \cite{Miura}. The exact formulation of the KdV equation comes from Korteweg and de Vries (1895) \cite{KdV}. This equation was studied in a numerical work by Fermi, Pasta and Ulam, and by Kruskal and Zabusky \cite{FPU,KZ}.

\medskip

From the mathematical point of view, equation (\ref{KdV}) is an \emph{integrable model} \cite{AKNS,AS,LAX1}, with infinitely many conservation laws. Moreover, since the Cauchy problem associated to (\ref{KdV}) is locally well posed in $L^2(\R)$ (cf. \cite{B1}), each solution is indeed global in time thanks to the \emph{Mass} conservation
\be\label{M}
M[u](t) := \frac 12 \int_\R u^2(t,x)dx =  M[u](0).
\ee
Another important conserved quantity, defined for $H^1(\R)$-valued solutions, is given by the \emph{Energy}
\be\label{E}
E[u](t) := \frac 12 \int_\R u_x^2(t,x) dx -\frac 13 \int_\R u^3(t,x) dx = E[u](0).
\ee
On the other hand, equation (\ref{KdV}) has solitary wave solutions called \emph{solitons}, namely solutions of the form
\be\label{Sol}
u(t,x) = Q_c (x-ct), \quad Q_c(s) := c Q(\sqrt{c} s), \quad c>0,
\ee
and
\be\label{Q}
Q (s):= \frac{3}{1+\cosh(s)} .
\ee

\medskip

The study of perturbations of solitons or solitary waves lead to the introduction of the concepts of \emph{orbital and asymptotic stability}. In particular, since energy and mass are conserved quantities, it is natural to expect that solitons are stable in the energy space $H^1(\R)$. Indeed, $H^1$-stability of KdV solitons has been considered in \cite{Benj, BSS}. On the other hand, the asymptotic stability has been studied e.g. in \cite{PW,MMnon}. 

\medskip

Concerning the more involved case of the sum of $N (\geq 2)$ decoupled solitons, stability and asymptotic stability results are very recent.  First of all, let us recall that, as a consequence of the integrability property, KdV allows the existence of solutions behaving, as time goes to infinity, as the sum of $N$ decoupled solitons. These solutions are well-known in the literature and are called \emph{$N$-solitons}, or generically \emph{multi-solitons} \cite{HIROTA}. Indeed, any $N$-soliton solution has the form $u(t,x) := U^{(N)}(x ; c_j, x_j - c_j t),$ where
\be\label{NS}
\big\{U^{(N)}(x; c_j, y_j) \, : \, c_j>0 , \, y_j\in \R, \, j=1,\ldots, N \big\}
\ee
is the family of explicit $N$-soliton profiles (see e.g. Maddocks-Sachs \cite{MS}, \S 3.1). In particular, this solution describes multiple soliton's collisions, but since solitons for KdV equation interact in a linear fashion, there is no residual appearing after the collisions, even if the equation is nonlinear in nature. This is also a consequence of the integrability property. 

\medskip

In \cite{MS}, the authors considered the $H^N(\R)$-stability of the $N$-soliton solution of KdV, by using $N$-conservation laws. Their approach strongly invokes the integrability of the KdV equation, and therefore, in order to enlarge the class of perturbations allowed, a more general method was needed. Precisely, in \cite{MMT, MMan}, the authors improved the preceding result by proving stability and asymptotic stability of the \emph{sum of $N$ solitons}, \emph{well decoupled} at the initial time, in the energy space. Their proof also applies for general nonlinearities and not only for the integrable cases, provided they have stable solitons, in the sense of Weinstein \cite{We}. Note that the \emph{well-preparedness} restriction on the initial data is by now necessary since there is no satisfactory collision theory for the non-integrable cases.\footnote{It turns out that Martel, Merle and the second author of this paper have succeed to describe the collision of two solitons for gKdV equations in some asymptotic regimes and with general nonlinearities beyond the integrable cases, see e.g. \cite{MMcol1,MMcol2,MMcol3,Mu1}.} The Martel-Merle-Tsai approach is based on the construction of $N$ \emph{almost conserved quantities}, related to the mass of each solitary wave, plus the total energy of the solution. Further developments on the $H^1$-stability theory can be found e.g. in  \cite{Bona}.

\medskip

As far as we know, the unique stability result for KdV solitons, below $H^1(\R)$, was proved by Merle and Vega in \cite{MV}. Precisely, in this work, the authors prove that solitons of (\ref{KdV}) are $L^2$-stable, by using the \emph{Miura transform} 
\be\label{Mi}
M[v] := \frac 3{\sqrt{2}} v_x - \frac 32 v^2,
\ee
which links solutions of the \emph{defocusing, modified KdV} equation,
\be\label{dmKdV}
v_t + (v_{xx} -v^3)_x=0, \quad v=v(t,x)\in \R, \quad (t,x)\in \R^2,
\ee
with solutions of the KdV equation (\ref{KdV}). In particular, the image of the family of \emph{kink} solutions of (\ref{dmKdV}) under the transformation (\ref{Mi}) is the soliton $Q_c$ above described, modulo a standard Galilean transformation (cf. \cite{MV}). Since the kink solution of (\ref{dmKdV}) is  $H^1$-stable (see e.g. \cite{Z, MV}), after a local inversion argument, the authors concluded the $L^2$-stability of the KdV soliton. Other applications of the Miura transform are local well and ill-posedness results (cf. \cite{KPV2,CKSTT}). However, the stability property in the case of $H^s$-perturbations, $s\neq 0,1$ is by now a very difficult and open problem. 

\medskip

The Merle-Vega's idea has been applied to different models describing several phenomena. A similar Miura transform is available for the KP II equation, a two-dimensional generalization of the KdV equation. In this case, the transform has an additional term which takes into account the second variable $y$. This property has been studied by Wickerhauser in \cite{W}, and used by Kenig and Martel in \cite{KM} in order to obtain well-posedness results. Finally, Mizumachi and Tzvetkov have shown the stability of solitary waves of KdV, seen as solutions of KP II, under periodic transversal perturbations \cite{MT} (see also Section \ref{5} for some additional remarks on this subject). For instability results, see e.g. \cite{RT}. Finally, we recall the $L^2$-stability result for solitary waves of the cubic NLS proved by Mizumachi and Pelinovsky in \cite{MP}. Now the proof introduces a \emph{B\"acklund transform} linking the zero and the solitary wave solutions.

\medskip

A natural question to consider is the generalization of the Merle-Vega's result to the case of multi-soliton solutions. In \cite{Dirac} (see also \cite{GS}), the author states that the Miura transform sends \emph{multi-kink} solutions of (\ref{dmKdV}) towards a well defined family of \emph{multi-soliton} solutions of (\ref{KdV}). However, we have found that multi-kinks are hard to manipulate, due to the continuous interaction of non-local terms (recall that a kink does not belong to $L^2(\R)$). Therefore we will follow a different approach.

\medskip

Indeed, in this work we invoke a \emph{Gardner transform} \cite{Ga0,Ga}, well-known in the mathematical and physical literature since the late sixties, and which links $H^1$-solutions of the Gardner equation\footnote{In this part we follow the notation of \cite{Mu1}.}
\be\label{Ga}
v_t + (v_{xx} + v^2 -\beta v^3 )_x=0, \; \hbox{ in } \;  \R_t \times \R_x , \quad \beta>0,
\ee
with $L^2$-solutions of the KdV equation (\ref{KdV}). The explicit formula of this transform is given in (\ref{Miura}). Let us recall that the Gardner equation is also an integrable model \cite{Ga}, with soliton solutions of the form 
$$
v(t,x) := Q_{c,\beta} (x-ct),
$$
and\footnote{See e.g. \cite{CGD,Mu1} and references therein for a more detailed description of solitons and integrability for the Gardner equation.} 
\be\label{SolG}
Q_{c,\beta} (s) := \frac{3c}{1+ \rho \cosh(\sqrt{c}s)}, \quad \hbox{ with }\quad  \rho := (1-\frac 92 \beta c)^{1/2}, \quad 0<c < \frac{2}{9\beta}.
\ee
In particular, in the formal limit $\beta\to 0$, we recover the standard KdV soliton (\ref{Sol})-(\ref{Q}). On the other hand, the Cauchy problem associated to (\ref{Ga}) is globally well-posed under initial data in the energy class $H^1(\R)$ (cf. \cite{KPV}), thanks to the mass (\ref{M}) and \emph{energy} conservation laws.
%\footnote{In this case, the (conserved)  energy is given by the formula
%$$
%E_G[u] := \frac 12 \int_\R u_x^2 -\frac 13\int_\R u^3 +\frac \beta 4 \int_\R u^4,
%$$
%which is $H^1$-subcritical.}   

\medskip

We are interested in the image of the family of solutions (\ref{SolG}) under the aforementioned, Gardner transform. Surprisingly enough, it turns out that the resulting family is {\bf nothing but} the KdV soliton family (\ref{Sol}), see (\ref{QcbQc}) below. This formally suggests that multi-soliton solutions of the Gardner equation (\ref{Ga}) are sent towards (or close enough to) multi-soliton solutions of the KdV model (\ref{KdV}), as is done  in \cite{Dirac} for the case of the Miura transform.  

\medskip

In this paper, we profit of this property to improve the $H^1$-stability and asymptotic stability properties proved by Martel, Merle and Tsai in \cite{MMT}, and Martel and Merle \cite{MMan}, now in the case of $L^2$-perturbations of the KdV multi-solitons. We first start with the case of an initial datum close enough to the {\bf sum of $N$ decoupled solitons} of the KdV equation. Our result is the following

\medskip

\begin{thm}[$L^2$-stability of the sum of $N$ solitons of KdV]\label{T1}~

Let $N\geq 2$ and $0<c_1^0<c_2^0<\ldots <c_N^0$. There exist parameters $\al_0, A_0, L, \ga>0$, such that the following holds.  Consider $u_0 \in L^2(\R)$, and assume that there exist $L>L_0$, $\al \in (0,\al_0)$ and $x_1^0<x_2^0<\ldots <x_N^0$, such that 
$$
 x_{j}^0>x_{j-1}^0 + L,\quad  \hbox{with} \quad  j=2, \ldots, N,
$$ 
and
\be\label{In}
\|  u_0 - R_0 \|_{L^2(\R)} \leq \al, \quad \hbox{ with } \quad R_0:= \sum_{j=1}^N Q_{c_j^0} (\cdot -x_j^0).
\ee
Then there exist $x_1(t), \ldots x_N(t)$ such that the solution $u(t)$ of the Cauchy problem for the KdV equation (\ref{KdV}), with initial data $u_0$, satisfies
\ben
\item \emph{Stability.}
\be\label{Fn}
\sup_{t\geq 0}\big\| u(t) - \sum_{j=1}^N Q_{c_j^0}(\cdot - x_j(t)) \big\|_{L^2(\R)}\leq A_0 (\al + e^{-\ga_0 L}).
\ee
\item \emph{Asymptotic stability.} 

\noindent
There exist $c_j(t)>0 $ and possibly a new set of $x_j(t)\in \R$, $j=1,\ldots, N$, such that 
\be\label{ASn}
\lim_{t\to +\infty}\big\| u(t) - \sum_{j=1}^N Q_{ c_j(t)}(\cdot - x_j(t)) \big\|_{L^2(x\geq \frac {c_1^0}{10} t)}=0.
\ee
Moreover, for all  $j=1,\ldots, N$ one has that $ \lim_{t\to +\infty } c_j(t)=: c_j^+ >0$ exists and satisfies 
$$
\sum_{j=1}^N | c_j^+ - c_j^0|\leq KA_0 (\al + e^{-\ga_0 L}),
$$
for some constant $K>0$.
\een
\end{thm}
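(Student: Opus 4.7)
The strategy is the one announced in the introduction: transport the $L^2$-perturbation problem for the KdV multi-soliton to an $H^1$-perturbation problem for a Gardner multi-soliton via the Gardner transform $M_\beta$, apply the stability and asymptotic stability results of Martel-Merle-Tsai \cite{MMT} and Martel-Merle \cite{MMan} to the flow of \eqref{Ga}, and pull the conclusions back to KdV.

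First I fix $\beta>0$ small enough that every $Q_{c_j^0,\beta}$ is defined through \eqref{SolG} and lies in the Weinstein-stable regime for Gardner (any $\beta$ with $\beta c_N^0<\tfrac{2}{9}$ works, up to a harmless shrinking of $\alpha_0$). The identity $M_\beta[Q_{c,\beta}(\cdot-y)]=Q_c(\cdot-y')$, up to an explicit Galilean translation, which is the observation emphasised in the introduction, implies that the Gardner multi-profile $\widetilde R_0 := \sum_{j=1}^N Q_{c_j^0,\beta}(\cdot-\widetilde x_j^0)$ satisfies $\|M_\beta[\widetilde R_0]-R_0\|_{L^2(\R)}\lesssim e^{-\gamma_0 L}$, thanks to the exponential localisation of the Gardner profiles and the control of the quadratic cross-term of $M_\beta$. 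Given $u_0$ with $\|u_0-R_0\|_{L^2}\le\alpha$, the key analytic step is then the construction of an $H^1$-preimage $v_0$ of $u_0$ under $M_\beta$ satisfying $\|v_0-\widetilde R_0\|_{H^1(\R)}\lesssim \alpha+e^{-\gamma_0 L}$. This is a local inversion of the first-order nonlinear map $M_\beta$ around $\widetilde R_0$: the linearised operator is a first-order differential operator with smooth, exponentially localised coefficients, and can be inverted on $H^1(\R)$ through a Riccati-type ODE for the residual $v_0-\widetilde R_0$, with constants that must be kept uniform in the pairwise distances $x_{j+1}^0-x_j^0$.

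With this $H^1$-preimage in hand, I apply the Martel-Merle-Tsai theorem \cite{MMT} to the Gardner equation, which is globally well-posed in $H^1(\R)$ by \cite{KPV} and whose solitons are Weinstein-stable in the chosen range of $\beta$. This yields translations $\widetilde x_j(t)$ and the estimate
\begin{equation*}
\sup_{t\ge 0}\Big\|v(t)-\sum_{j=1}^N Q_{c_j^0,\beta}(\cdot-\widetilde x_j(t))\Big\|_{H^1(\R)}\,\le\,A_0\bigl(\alpha+e^{-\gamma_0 L}\bigr).
\end{equation*}
Setting $u(t):=M_\beta[v(t)]$ produces an $L^2$-solution of \eqref{KdV}, which by uniqueness in $L^2$ (\cite{B1}) coincides with the KdV evolution of $u_0$. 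The Lipschitz bound for $M_\beta:H^1(\R)\to L^2(\R)$ on bounded sets, combined with $M_\beta\bigl[\sum_j Q_{c_j^0,\beta}(\cdot-\widetilde x_j(t))\bigr]=\sum_j Q_{c_j^0}(\cdot-x_j(t))+O_{L^2}(e^{-\gamma_0 L})$, converts the Gardner estimate into \eqref{Fn}, with $x_j(t)$ read off from $\widetilde x_j(t)$ via the Galilean shift. The asymptotic statement \eqref{ASn} is obtained along identical lines, with \cite{MMan} in place of \cite{MMT}: the Gardner analogue gives $H^1$-convergence of $v(t)$ on $x\ge\tfrac{c_1^0}{10}t$ to a sum of modulated Gardner solitons with speeds $d_j(t)\to d_j^+$ and $\sum_j|d_j^+-c_j^0|\lesssim A_0(\alpha+e^{-\gamma_0 L})$; locality of $M_\beta$ in $x$ preserves the half-line cut-off, and the smooth relation between Gardner and KdV soliton parameters converts $d_j^+$ into $c_j^+$ together with the stated bound.

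The main obstacle I expect is the local inversion of $M_\beta$ at $\widetilde R_0$ with constants \emph{independent of the soliton separation $L$}: a naive perturbation argument based on a single Gardner soliton cannot handle the multi-bump geometry, so one must exploit the sign of the first-order term of $M_\beta$ together with the positivity of the leading zero-order coefficient for $\beta$ small, via an explicit ODE argument, to obtain preimages in $H^1(\R)$ whose norms do not deteriorate as $L\to\infty$. Once this inversion is carried out, the rest of the proof is a direct transport of the $H^1$-Gardner theory through $M_\beta$.
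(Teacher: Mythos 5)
Your proposal follows essentially the same route as the paper: fix $\beta$ small, invert the Gardner transform $M_\beta$ locally around the Gardner multi-soliton profile to lift $u_0$ to an $H^1$-neighborhood of $\sum_j Q_{c_j^0,\beta}$ (the paper does this by a fixed-point argument for the operator $-\tfrac32\sqrt{2\beta}\partial_x+(1-3\beta S_0)$, with the same loss of a factor $\beta^{-1/2}$ you absorb by shrinking $\alpha_0$), then apply the Martel--Merle--Tsai and Martel--Merle $H^1$ results to the Gardner flow and push forward through $M_\beta$, concluding by $L^2$-uniqueness of the KdV solution; the asymptotic statement is transferred in the same way using the locality of $M_\beta$ and the identity $M_\beta[Q_{c,\beta}]=Q_c(\cdot-\delta)$ (a pure translation, so no parameter conversion is actually needed). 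This matches the paper's proof in structure and in all essential steps.
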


\medskip

Before explaining the main ideas behind the proof of this result, some remarks are in order.
\medskip

\noindent
{\bf Remarks.}

\noindent
1. Compared with \cite{MV}, our proof gives an explicit upper bound on the error term (cf. (\ref{Fn})). This improvement is related to a \emph{fixed point} argument needed for the proof of an inversion procedure, see Section \ref{2} for more details. For the proof of this result, one requires the parameter $\beta>0$ in the Gardner equation (\ref{Ga}) small enough. However, since the formal limit $\beta \to 0$ in (\ref{Ga}) is the KdV equation, the Gardner transform (\ref{Miura}) linking both equations degenerates to the identity and thus does not improve the regularity of the inverse. However, by taking $\al>0$ small, depending on $\beta$ small, we are able to obtain a still satisfactory bound on the stability (\ref{Fn}). 

\medskip

\noindent
2. We do not believe that (\ref{ASn}) holds in the whole real line $\{x\in \R\}$, e.g. based in the Martel-Merle \cite{MMnon} result. Indeed, they have constructed a solution the KdV equation composed of a big soliton plus an infinite train of small solitons, still satisfying the stability property. This implies that there is no strong convergence in $H^1(\R)$ in the general case.

\medskip

Finally, our last result corresponds to the global $L^2$-stability and asymptotic stability of the \emph{$N$-soliton} solution of KdV. 
It turns out that this result is just a direct corollary of  Theorem \ref{T1} and the uniform continuity of the KdV flow for $L^2$-data, as it was pointed out in \cite{MMT}, Corollary 1. We include the proof at the end of Section \ref{3}, for the sake of completeness.

\medskip

\begin{cor}[$L^2$-stability and asymptotic stability of the $N$-soliton of KdV]\label{T3}~

Let  $\delta>0$, $N\geq 2$, $0<c_1^0<\ldots<c_N^0$ and $x_1^0,\ldots,x_N^0\in \R$. There exists $\al_0>0$ such that if $0<\al<\al_0$, then the following holds. Let $u(t)$ be a solution of (\ref{KdV}) such that 
$$
\| u(0)- U^{(N)}(\cdot ; c_j^0,- x_j^0) \|_{L^2(\R)}\leq \al,
$$
with $U^N$ the $N$-soliton profile described in (\ref{NS}). Then there exist $x_j(t)$, $j=1,\ldots, N$, such that
\be\label{stabilitycor}
\sup_{t\in \R} \big\| u(t) - U^{(N)}( \cdot ;c_j^0,-x_j(t)) \big\|_{L^2(\R)} \leq \delta.
\ee
Moreover, there exist $c_j^{+\infty}>0$  such that
\be\label{asympstabcor}
\lim_{t\to +\infty} \big\| u(t) - U^{(N)}(\cdot ; c_j^{+\infty},-x_j(t)) \big\|_{L^2(x> \frac {c_1^0}{10} t )} =0, 
\ee
and $x_j(t)$ are $C^1$ for all $|t|$ large enough, with $x_j'(t)\to c_j^{+\infty}\sim c_j^0$ as $t\to +\infty$. A similar result holds as $t\to -\infty$, with the obvious modifications.
\end{cor}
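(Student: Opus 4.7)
The plan is to reduce the $N$-soliton case directly to the decoupled-sum case already handled by Theorem \ref{T1}, exploiting two ingredients: the classical asymptotic decomposition of the $N$-soliton profile into well-separated single solitons as $|t|\to \infty$, and the uniform continuity of the $L^2$-KdV solution map on bounded time intervals (cf. Bourgain \cite{B1}). Throughout, denote by $u^*(t,x) := U^{(N)}(x; c_j^0, x_j^0 - c_j^0 t)$ the reference $N$-soliton solution.

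First I would fix the parameters $\alpha_0, A_0, L_0, \gamma_0$ produced by Theorem \ref{T1} for the set of velocities $\{c_j^0\}$. From the explicit asymptotics of $U^{(N)}$ (see, e.g., \cite{HIROTA,MS}), one selects a time $T_0 = T_0(\delta, c_j^0, x_j^0, L_0) > 0$ large enough so that there exist asymptotic shifts $\Delta_j^+ \in \R$ with
$$
\big\| u^*(T_0, \cdot) - \sum_{j=1}^N Q_{c_j^0}(\cdot - x_j^0 - c_j^0 T_0 - \Delta_j^+) \big\|_{L^2(\R)} \leq \frac{\delta}{4},
$$
and consecutive centers separated by more than $L_0$. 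Hence $u^*(T_0)$ already lies inside the well-decoupled-soliton neighbourhood to which Theorem \ref{T1} applies.

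Next, I would invoke the uniform $L^2$-continuity of the KdV flow on the compact interval $[0, T_0]$: by \cite{B1}, there exists $\alpha_0 = \alpha_0(\delta, T_0)$ such that any initial datum with $\| u(0) - u^*(0)\|_{L^2} \leq \alpha \leq \alpha_0$ satisfies
$$
\sup_{t \in [0, T_0]} \| u(t) - u^*(t) \|_{L^2(\R)} \leq \frac{\delta}{2}.
$$
In particular $u(T_0)$ lies within $L^2$-distance at most $3\delta/4$ of a well-decoupled sum of $N$ KdV solitons. Applying Theorem \ref{T1} starting from time $T_0$ (with $u(T_0)$ as initial datum) yields the sup-in-$t$ bound (\ref{stabilitycor}) for $t\geq T_0$ and the asymptotic stability (\ref{asympstabcor}), together with limits $c_j^{+\infty}$ close to $c_j^0$ and $C^1$-modulation paths $x_j(t)$ satisfying $x_j'(t)\to c_j^{+\infty}$. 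Concatenating with the uniform-continuity bound on $[0, T_0]$ gives (\ref{stabilitycor}) for all $t \geq 0$. For $t\leq 0$, I would argue symmetrically using the time-reversal symmetry $(t,x) \mapsto (-t,-x)$ of (\ref{KdV}), which sends right-moving KdV solitons to right-moving KdV solitons (since $Q_c$ is even) and reduces the negative-time case to the previous one.

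The main obstacle is the interplay between the two time scales: $T_0$ depends on $\delta$ (and hence on $L_0$), while the admissible size $\alpha_0$ of the perturbation depends on $T_0$ through the uniform-continuity modulus of the $L^2$-solution map, which is \emph{not} quantitative in the Bourgain theory. Since the corollary makes no quantitative claim on $\alpha_0$, this circular dependence is harmless: one chooses $T_0$ first, then $\alpha_0$ small enough. Beyond this sequencing, the argument is a mechanical concatenation of Theorem \ref{T1} with the $L^2$ global well-posedness of \cite{B1}.
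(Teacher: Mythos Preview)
Your approach is essentially the same as the paper's: decouple $U^{(N)}$ into a sum of well-separated solitons at a large time $T_0$, use uniform $L^2$-continuity of the KdV flow on $[0,T_0]$ (Bourgain) to propagate the smallness of the perturbation up to $T_0$, apply Theorem~\ref{T1} from $T_0$ onward, and treat negative times via the symmetry $(t,x)\mapsto(-t,-x)$. The paper also explicitly converts back from the sum of solitons to the $N$-soliton profile $U^{(N)}$ using the decoupling bound (your argument needs this step too, for $t\geq T_0$).

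One bookkeeping point to fix: the constants $\delta/4$ and $\delta/2$ you use at time $T_0$ are not the right ones. The input to Theorem~\ref{T1} must be below its threshold $\alpha_0$, and its output carries the amplification factor $A_0$; with your choices the resulting bound $A_0(3\delta/4 + e^{-\gamma_0 L})$ will in general exceed $\delta$. The paper instead introduces an auxiliary small parameter $\alpha_1<\alpha_0$ with $A_0(\alpha_1+e^{-\gamma_0 L})<\delta/2$, chooses $T_0$ so that $u^*(T_0)$ is within $\alpha_1/2$ of the decoupled sum, and then takes $\alpha$ small enough that $u(T_0)$ is within $\alpha_1$. This is a trivial adjustment to your argument, not a conceptual gap.
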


\medskip

\noindent
{\bf Remark.}  Let us emphasize that the proof of this result requires the \emph{existence} and the \emph{explicit form} of the multi-soliton solution of the KdV equation, and therefore the integrable character of the equation. In particular, we do not believe that a similar result is valid for a completely general, non-integrable gKdV equation, unless one considers some perturbative regimes (cf. \cite{MMcol1,MMcol3} for some global $H^1$-stability results in the non-integrable setting.)

%\medskip

%\noindent
%2. Note that the hypotheses of Theorem \ref{T1} are \emph{necessary}, since the multi-soliton solution $U^{(N)}$ does not behave as the sum of $N$ solitons e.g., during the interaction time. 

\medskip

\subsection*{Idea of the proofs}
Let us explain the main steps of the proofs. We follow the approach introduced in \cite{MV}; however, in this opportunity, in order to consider the case of several solitons, we introduce some new ingredients:

\medskip

\noindent
1. \emph{The Gardner transform}. First of all, given any $\beta>0$ and $v(t)\in H^1(\R)$, solution of the Gardner equation (\ref{Ga}), the Gardner transform \cite{Ga}
\be\label{Miura}
u(t) =  M_\beta [v](t) :=  [v - \frac 32 \sqrt{2\beta} v_x -\frac 32 \beta v^2](t),
\ee
is an $L^2$-solution of KdV (in the integral sense).\footnote{See Section \ref{5} for additional information about this transform.} Compared with the original Miura transform (\ref{Mi}), it has an additional \emph{linear} term which simplifies the proofs. In particular, a direct computation (see Appendix \ref{A}) shows that for the Gardner soliton solution (\ref{SolG}), one has
\bea\label{QcbQc}
M_\beta [Q_{c,\beta}](t) & = & \big[Q_{c,\beta} - \frac 32 \sqrt{2\beta} Q_{c,\beta}'  -\frac 32 \beta Q_{c,\beta}^2\big] (x-ct) \nonu \\
& = & Q_c(x-ct - \delta ), 
\eea
with $\delta=\delta(c,\beta)>0$ provided $\beta>0$, and $Q_c$ the KdV   soliton solution (\ref{Sol}). In other words, the Gardner transform (\ref{Miura}) sends the Gardner soliton towards a slightly translated KdV soliton. 

\medskip

\noindent 
2. \emph{Lifting}. Given an initial data $u_0$ satisfying (\ref{In}), with $\al>0$ small, we solve  the Ricatti equation $u_0 = M_\beta[v_0]$ in $H^1(\R)$. In addition, we prove that the function $v_0$ is actually close in $H^1(\R)$ to the sum of $N$-solitons of the Gardner equation. However, for the proof of this result, we do not follow the Merle-Vega approach, which is mainly based in a minimization procedure. Instead, we solve the Ricatti equation by using a \emph{fixed point} argument in a neighborhood of $R_0$. It turns out that in order to do this, we need to assume that $\beta$, the free parameter of the Gardner equation, is small enough, and therefore we require $\al$ smaller, depending on $\beta$. In any case, and as a by-product, we obtain explicit bounds on the distance of the solution $v_0$ and the Gardner multi-soliton solution, that one can see in Theorem \ref{T1}. This is done in Section \ref{2}.

\medskip

\noindent 
3. \emph{Conclusion.} Finally, we invoke the $H^1$-stability theory developed by Martel-Merle-Tsai and Martel-Merle  \cite{MMT,MMan}, in the particular case of the Gardner equation. The final conclusion follows directly after a new application of the Gardner transform (\ref{Miura}). This is done in Section \ref{3}. Finally, the global character of the stability and asymptotic stability properties follow after a simple continuity argument applied to the $N$-soliton solution of the KdV equation. This is done at the end of Section \ref{3}.

\medskip

We recall that the proof of Theorem \ref{T1} does not use the \emph{full integrable character} of (\ref{KdV}) and (\ref{Ga}), but only the Gardner transform linking both equations. However, for the proof of Corollary \ref{T3}, we need to work with the $N$-soliton solution. In addition, we simplify and improve the proof of \cite{MV}, since the lifting procedure is easier to prove in the case of localized solutions, and we give an explicit bound in the stability result. It is expected that this method may be applied to others models, see Section \ref{5} for more details.

\bigskip

\section{Lifting}\label{2}

\medskip

Let $u_0\in L^2(\R)$ satisfying (\ref{In}). Let us denote by $z_0 := u_0 -R_0,$ such that $\|z_0\|_{L^2(\R)} \leq \al.$  In this section, our objective is to solve the nonlinear Ricatti equation
\be\label{Ric}
M_\beta [v_0] =  u_0 = R_0 + z_0, 
\ee
with $M_\beta $ the Gardner transform given by (\ref{Miura}). We will do that provided $\al$ is small enough. In other words, we want to solve the Gardner transform in a neighborhood of the multi-soliton solution $R_0$. This is the purpose of the following

\begin{prop}[Local invertibility around $R_0$]\label{P0}~

There exists $\beta_0>0$ such that, for all $0<\beta<\beta_0$, the following holds. There exist $K_0,L_0,\ga_0,\al_0>0$ such that for all $0<\al<\al_0$, $L>L_0$, and $\|z_0\|_{L^2(\R)} \leq \al $, there exists a solution $v_0 \in H^1(\R)$ of (\ref{Ric}), such that 
\be\label{v0}
\big\| v_0 - \sum_{j=1}^N Q_{c_j^0, \beta} (\cdot - x_j^0  - \delta_j) \big\|_{H^1(\R)} \leq  K_0(\frac {\al}{\sqrt{\beta}} + e^{-\ga_0 L}),
\ee
with 
\be\label{dj}
\delta_j = \delta_j(c_j^0):=  (c_j^0)^{-1/2} \cosh^{-1} (\frac 1{\rho_j}),\footnote{We take the positive inverse.} \quad \rho_j := (1 -\frac 92 \beta c_j^0 )^{1/2},\quad  j=1,\ldots, N,
\ee
and $Q_{c,\beta}$ being the soliton solution of the Gardner equation (\ref{Ga}).
\end{prop}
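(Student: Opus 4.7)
The plan is to realise $v_0$ as a small $H^1$-perturbation of the explicit Gardner $N$-soliton approximation
\[
\tilde R_0(x) := \sum_{j=1}^{N} Q_{c_j^0,\beta}\bigl(x-x_j^0-\delta_j\bigr),
\]
the shifts $\delta_j$ of (\ref{dj}) being chosen precisely so that, by (\ref{QcbQc}), $M_\beta[Q_{c_j^0,\beta}(\cdot-x_j^0-\delta_j)] = Q_{c_j^0}(\cdot-x_j^0)$ for each $j$. Since the only nonlinearity of $M_\beta$ is quadratic, writing $v_0=\tilde R_0+w_0$ and expanding $(\tilde R_0+w_0)^2$ reduces (\ref{Ric}) to
\[
\mathcal{T}_\beta w_0 \;=\; F_0 + \tfrac{3}{2}\beta\,w_0^2, \qquad \mathcal{T}_\beta w := w-\tfrac{3}{2}\sqrt{2\beta}\,w_x - 3\beta\,\tilde R_0\,w,
\]
in which the forcing
\[
F_0 \;:=\; z_0 \;+\; 3\beta\sum_{1\le i<j\le N} Q_{c_i^0,\beta}(\cdot-x_i^0-\delta_i)\,Q_{c_j^0,\beta}(\cdot-x_j^0-\delta_j)
\]
collects the initial $L^2$ error and the quadratic cross-terms arising from soliton interactions.

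Since $\delta_j=O(\sqrt\beta)$ as $\beta\to 0$, the centres $x_j^0+\delta_j$ remain separated by essentially $L$, and the exponential decay of the Gardner profiles in (\ref{SolG}) gives $\|Q_{c_i^0,\beta}(\cdot-x_i^0-\delta_i)Q_{c_j^0,\beta}(\cdot-x_j^0-\delta_j)\|_{L^2(\R)}\le C e^{-\gamma_0 L}$ for some $\gamma_0>0$ depending only on $\min_j c_j^0$; hence $\|F_0\|_{L^2}\le \al+C\beta\,e^{-\gamma_0 L}$. To invert $\mathcal T_\beta$, I first handle the constant-coefficient part $\mathcal T_{\beta,0}w:=w-\tfrac32\sqrt{2\beta}\,w_x$, a Fourier multiplier whose symbol has modulus $(1+\tfrac{9}{2}\beta\xi^2)^{1/2}$; this yields $\|\mathcal T_{\beta,0}^{-1}\|_{L^2\to L^2}\le 1$ and $\|\partial_x\mathcal T_{\beta,0}^{-1}\|_{L^2\to L^2}\le C\beta^{-1/2}$, so $\|\mathcal T_{\beta,0}^{-1}\|_{L^2\to H^1}\le C\beta^{-1/2}$. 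Because $\|\tilde R_0\|_{L^\infty}$ is bounded uniformly in small $\beta$, the multiplication operator $3\beta\tilde R_0$ has $H^1\to L^2$ norm $O(\beta)$, so $\mathcal T_{\beta,0}^{-1}\circ(3\beta\tilde R_0\cdot)$ has norm $O(\sqrt\beta)$ on $H^1$. A Neumann series then yields, for $\beta<\beta_0$ small, the inversion bound $\|\mathcal T_\beta^{-1}\|_{L^2\to H^1}\le C\beta^{-1/2}$.

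Setting $\Phi(w):=\mathcal T_\beta^{-1}\bigl(F_0+\tfrac32\beta w^2\bigr)$ and using the one-dimensional Sobolev embedding $H^1\hookrightarrow L^\infty$ to bound $\|w^2\|_{L^2}\le\|w\|_{H^1}^2$, one has
\[
\|\Phi(w)\|_{H^1}\le C\bigl(\tfrac{\al}{\sqrt\beta}+e^{-\gamma_0 L}\bigr)+C\sqrt\beta\,\|w\|_{H^1}^2, \qquad \|\Phi(w)-\Phi(\tilde w)\|_{H^1}\le 2C\sqrt\beta\,\bigl(\|w\|_{H^1}+\|\tilde w\|_{H^1}\bigr)\|w-\tilde w\|_{H^1}.
\]
Choosing $K_0$ large and then $\al_0,\beta_0$ small makes $\Phi$ send the closed ball $B_R\subset H^1(\R)$ of radius $R:=K_0(\al/\sqrt\beta+e^{-\gamma_0 L})$ into itself contractively, and its unique fixed point $w_0$ delivers $v_0=\tilde R_0+w_0\in H^1(\R)$ that solves (\ref{Ric}) and satisfies (\ref{v0}). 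The principal obstacle is the $\beta^{-1/2}$ blow-up of $\mathcal T_\beta^{-1}$, which reflects the degeneration of the Gardner transform to the identity as $\beta\downarrow 0$: it forces $\al$ to be small compared with $\sqrt\beta$ and is the origin of the non-optimal ratio $\al/\sqrt\beta$ appearing in (\ref{v0}) and ultimately in the stability bound (\ref{Fn}) of Theorem \ref{T1}.
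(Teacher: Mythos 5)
Your proposal is correct and follows essentially the same route as the paper's proof: decompose $v_0=S_0+w_0$ around the shifted Gardner $N$-soliton sum (your $\tilde R_0$), reduce (\ref{Ric}) to the same linear operator $w\mapsto w-\tfrac32\sqrt{2\beta}\,w_x-3\beta S_0 w$ with forcing $z_0$ plus the $O(\beta e^{-\gamma_0 L})$ cross-terms, invert it with an $L^2\to H^1$ bound of order $\beta^{-1/2}$ by treating $3\beta S_0$ perturbatively (your Neumann series is the same mechanism as the paper's Fourier-side fixed point via Young's inequality), and close with a contraction on the ball of radius $K_0(\alpha/\sqrt{\beta}+e^{-\gamma_0 L})$. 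Incidentally, your estimate $\delta_j=O(\sqrt{\beta})$ is the correct order (the paper states $O(\beta)$, a harmless slip), and it suffices for the separation of the shifted centres.
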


\begin{proof}

\noindent
1. First of all, in what follows we assume $\beta>0$ small in such a way that $\beta <\frac{2}{9c_N^0}$ and $Q_{c_j^0,\beta}$ is well defined for all $j=1,\ldots,N$. Let us consider
$$
S_0(x) :=\sum_{j=1}^N Q_{c_j^0, \beta} (x - x_j^0  - \delta_j),
$$
with $\delta_j$ defined in (\ref{dj}). Let us recall that 
$$
M_\beta[Q_{c_j^0, \beta} (x - x_j^0  - \delta_j)] = Q_{c_j^0}(x-x_j^0),
$$
(cf. Appendix \ref{A}). A Taylor expansion shows that $\delta_j =O(\beta)$, independent of $c_j^0$, as $\beta$ approaches zero. Therefore, in what follows we may suppose that
\be\label{Ma1}
x_j^0 +\delta_j \geq x_{j-1}^0 +\delta_{j-1} + \frac 9{10}L, \quad j=2,\ldots ,N,
\ee
by taking $\beta$ small enough.

\medskip

\noindent
2. It is clear that $S_0\in H^1(\R)$ with $\|S_0\|_{H^1(\R)} \leq K$, independent of $\beta$. Moreover, a direct computation, using (\ref{QcbQc}) and (\ref{Ma1}), shows that
\bea
M_\beta [S_0](t) & = & \sum_{j=1}^N M_\beta [ Q_{c_j^0, \beta} (\cdot - x_j^0  - \delta_j)]  \nonu \\
& & \qquad \qquad - \frac 32 \beta \sum_{i\neq j} Q_{c_i^0, \beta} (\cdot - x_i^0  - \delta_i)Q_{c_j^0, \beta} (\cdot - x_j^0  - \delta_j) \nonu\\
& =& \sum_{j=1}^N Q_{c_j^0}(\cdot -x_j^0)  - \frac 32 \beta \sum_{i\neq j} Q_{c_i^0, \beta} (\cdot - x_i^0  - \delta_i)Q_{c_j^0, \beta} (\cdot - x_j^0  - \delta_j) \nonu \\
& =&  \sum_{j=1}^N Q_{c_j^0}(\cdot -x_j^0)  +O_{L^2(\R)} (\beta e^{-\ga_0 L}) \nonu \\
& = & R_0 +O_{L^2(\R)} (\beta e^{-\ga_0 L}),\label{MR0}
\eea
for some $\ga_0>0$, independent of $\beta$ small. 

\medskip

\noindent
3. Now we look for a solution $v_0\in H^1(\R)$ of (\ref{Ric}), of the form  $v_0 = S_0 + w_0,$ and $w_0$ small in $H^1(\R)$.  In other words, $w_0$ has to solve the nonlinear equation
\be\label{NLP}
\mathcal L[w_0]  =  (R_0- M_\beta [S_0]  ) + z_0 +\frac 32 \beta w_0^2, 
\ee
with 
\be\label{NLP1}
\mathcal L[w_0] :=  - \frac 32 \sqrt{2\beta} w_{0,x} +(1-3\beta S_0)w_0. 
\ee
We may think $\mathcal L$ as a unbounded operator in $L^2(\R)$, with dense domain $H^1(\R)$. From standard energy estimates, one has that for $\beta>0$ small enough, any solution $w_0\in H^1(\R)$ of the linear problem
\be\label{LF}
\mathcal L[w_0] = f, \quad f\in L^2(\R),
\ee
must satisfy
$$
\|(w_0)_x\|_{L^2(\R)} \leq \frac K{\sqrt{\beta}}( \|w_0\|_{L^2(\R)} + \|f\|_{L^2(\R)}),
$$
with $K>0$ independent of $\beta$. On the other hand, to obtain a-priori $L^2$-bounds, note that from  the Young inequality and Plancherel,\footnote{Here $\hat{\cdot}$ denotes the Fourier transform.} 
$$
\| \hat{S}_0 \star \hat{w}_0 \|_{L^2(\R)} \leq \| \hat{S}_0\|_{L^1(\R)} \| \hat{w}_0 \|_{L^2(\R)}.
$$
Since $S_0 $ is in the Schwartz class, one has $\hat{S}_0 \in L^1(\R)$, with uniform bounds. 
By taking $\beta>0$ small and the Fourier transform in (\ref{LF}), one has
$$
(- \frac 32 i\sqrt{2\beta} \xi  +1) \hat{w}_0(\xi) = \hat f (\xi)+  O_{L^2(\R)}(\beta \hat{w}_0).
$$ 
Therefore, using Plancherel,
$$
\|w_0\|_{L^2(\R)} \leq K\|f\|_{L^2(\R)}.
$$
In concluding, one has, for some fixed constant $K_0>0$,
\be\label{solw}
\|w_0\|_{H^1(\R)} \leq \frac {K_0}{\sqrt{\beta}} \|f\|_{L^2(\R)},
\ee
for any $w_0\in H^1(\R)$ solution of (\ref{LF}). In order to prove the existence and uniqueness of a solution of (\ref{LF}), we use a fixed point approach, in the spirit of \cite{W,KM}. Let us introduce the ball
$$
\mathcal B_0 := \Big\{ w_0 \in H^1(\R) \; \Big| \;  \|w_0\|_{H^1(\R)} \leq \frac {K_0}{\sqrt{\beta}}\|f\|_{L^2(\R)} \Big\},
$$
and the complex operator in the Fourier space,
$$
T_0[g](\xi) := \frac{3\beta \hat{S}_0 \star g (\xi) + \hat f(\xi)}{1+\frac 32 i\sqrt{2\beta} \xi }.
$$
It is clear that problem (\ref{LF}) can be written in Fourier variables as the fixed point problem
$$
g = T_0[g], \quad g:= \hat{w}_0.
$$
By simple inspection one can see that $T_0$ is a contraction on $\mathcal B_0$. Indeed, note that for $w_0\in \mathcal B_0$, $g:=\hat{w}_0$,
$$
\|T_0[g] \|_{L^2(\R)} \leq K (\beta \|g\|_{L^2(\R)} + \|f\|_{L^2(\R)}) \leq \frac{K_0}{2\sqrt{\beta}} \|f\|_{L^2(\R)},  
$$
and
$$
\| \xi  T_0[g] \|_{L^2(\R)} \leq K (\beta \| \xi  g\|_{L^2(\R)} +  \frac{1}{\sqrt{\beta}}\|f\|_{L^2(\R)}) \leq \frac{K_0}{2\sqrt{\beta}} \|f\|_{L^2(\R)},
$$
by taking $K_0$ larger. The contraction part works easier. The fixed point theorem gives the existence and uniqueness result. 
%From the Fredholm alternative theorem, there exists a solution $w_0\in H^1(\R)$ of (\ref{LF}), which satisfies (\ref{solw}).

\medskip

In what follows, let us denote by $T:= \mathcal L^{-1} : L^2(\R)Ê\to H^1(\R) $ the resolvent operator constructed in step 3.

\medskip

\noindent
4. Finally, from (\ref{NLP}), we want to solve the nonlinear problem
\be\label{FP}
w_0 = T[w_0] = \mathcal L^{-1} \big[  (R_0- M_\beta [S_0]  ) + z_0 +\frac 32 \beta w_0^2 \big]. 
\ee
In order to use, once again, a fixed point argument, let us introduce the ball
$$
\mathcal B := \Big\{ w_0 \in H^1(\R) \; \Big| \;  \|w_0\|_{H^1(\R)} \leq 2K_0(\frac{\al}{\sqrt{\beta}} +e^{-\ga_0 L}) \Big\},
$$
with $K_0>0$ the constant from (\ref{solw}), and $\ga_0>0$ given in (\ref{MR0}). Let $w_0\in \mathcal B$. Note that, from (\ref{FP}), (\ref{MR0}) and (\ref{solw}) 
\bee
\|T[w_0]\|_{H^1(\R)} & \leq & \frac {K_0}{\sqrt{\beta}} [ \| R_0- M_\beta [S_0]  \|_{L^2(\R)} +\al + \beta \|w_0^2\|_{L^2(\R)} ]  \\
& \leq & \frac {K_0}{\sqrt{\beta}} [ K\beta e^{-\ga_0 L} + \al +  4 K_0^2 \beta (\frac{\al}{\sqrt{\beta}} +e^{-\ga_0 L} )^2  ] \\
& \leq &  K_0 ( K\sqrt{\beta} + KK_0 \beta e^{-\ga_0 L} + KK_0\al\sqrt{\beta})e^{-\ga_0 L} \\
& & \qquad + K_0\frac {\al}{\sqrt{\beta}} (1+KK_0\al).
\eee
By taking $\beta_0$ small, and then $\al_0$ smaller if necessary, we can ensure that the above conclusions still hold and therefore
$$
\|T[w_0]\|_{H^1(\R)}   \leq   \frac 32K_0 (\frac{\al}{\sqrt{\beta}} + e^{-\ga_0 L} ). 
$$
This proves that $T(\mathcal B)\subseteq \mathcal B$. In the same way, one can prove that $T$ is a contraction. Indeed, we have for $w_1,w_2\in \mathcal B$,
\bee
\|T[w_1] -T[w_2] \|_{H^1(\R)} & \leq & K_0\beta  \| \mathcal L^{-1} [ w_1^2 -w_2^2] \|_{H^1(\R)}  \\
& \leq &  K K_0 (\frac{\al}{\sqrt{\beta}} +e^{-\ga_0 L}) \beta  \|w_1 -w_2\|_{H^1(\R)} \\
& <& \frac 12 \|w_1 -w_2\|_{H^1(\R)}, 
\eee
provided $ \beta_0$ is small enough. Therefore, $T$ is a contraction mapping from $\mathcal B$ into itself, and there exists a unique fixed point for $T$. The proof is now complete. 
\end{proof}
 
%\medskip

%\noindent
%{\bf Important remark.} Note that Proposition \ref{P0} is also valid under the following assumptions $(i)$ $\al$ and $\beta$ small; $(ii)$  if we suppose that $S_0$ is simply the multi-soliton solution of the Gardner equation, then $M_\beta[S_0]$ is the multi-soliton of KdV, up to some translations in space. Indeed, following \cite{Dirac}, it is expected that
%$$
%M_\beta[\hbox{Gardner's multi-soliton}] \in \{ \hbox{KdV's multi-solitons}\},
%$$
%since a similar result is valid for the multi-kink solution of mKdV and the Miura transform \cite{Dirac}. However, in this paper we have  preferred not to follow this alternative.

\bigskip

\section{Proof of the Main Theorems}\label{3}

\medskip

In this section we prove Theorem \ref{T1} and Corollary \ref{T3}.

\medskip

\subsection{Proof of Theorem \ref{T1}}~

\medskip

1. Let us assume the hypotheses mentioned in the statement of Theorem \ref{T1}, in particular (\ref{In}). From Proposition \ref{P0}, by taking $\al_0$ smaller if necessary, there exist $\beta>0$ small, and  $v_0\in H^1(\R)$, solution of the Ricatti equation (\ref{Ric}), which satisfies (\ref{v0}).

\medskip

Next, we recall the following $H^1$-stability result valid for the Gardner equation.

\begin{prop}[$H^1$-stability for Gardner solitons, \cite{MMT,MMan}]\label{P1}~

Let $0<c_1^0<c_2^0<\ldots <c_N^0<\frac{2}{9\beta}$ be such that 
\be\label{WC}
\partial_c\int_\R Q_{c,\beta}^2 \Big|_{c=c_j}>0, \quad \hbox{for all } j=1,\ldots, N. \qquad (\hbox{Weinstein's criterium.})
\ee
There exists $\tilde \al_0, \tilde A_0, \tilde L_0, \tilde\ga>0$ such that the following is true.  Let $v_0 \in H^1(\R)$, and assume that there exists $\tilde L> \tilde L_0$, $\tilde \al \in (0,\tilde \al_0)$ and $\tilde x_1^0< \tilde x_2^0<\ldots < \tilde x_N^0$, such that 
\bea\label{Inv}
& & \|  v_0 -\sum_{j=1}^N Q_{c_j^0,\beta} (\cdot - \tilde x_j^0 ) \|_{H^1(\R)} \leq \tilde \al, \\
&  &  \quad \tilde x_{j}^0> \tilde x_{j-1}^0 + \tilde L, \quad j=2, \ldots, N.
\eea
Then there exists $\tilde x_1(t), \ldots \tilde x_N(t)$ such that the solution $v(t)$ of the Cauchy problem associated to (\ref{Ga}), with initial data $v_0$, satisfies 
$$
v(t) = S(t) + w(t), \quad S(t) :=\sum_{j=1}^N Q_{c_j^0,\beta}(\cdot - \tilde x_j(t) ),
$$
and
\be\label{Fnv}
\sup_{t\geq 0} \Big\{ \| w(t)\|_{H^1(\R)} + \sum_{j=1}^N  |\tilde x'_j(t)-c_j|   \Big\} \leq \tilde A_0 (\tilde \al+ e^{-\tilde \ga \tilde L}).
\ee
\end{prop}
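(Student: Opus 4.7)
The plan is to transpose, line by line, the $H^1$ stability argument of Martel-Merle-Tsai \cite{MMT} for the sum of $N$ well-decoupled gKdV solitons (together with the modulation and monotonicity refinements of Martel-Merle \cite{MMan}) to the specific Gardner nonlinearity (\ref{Ga}). The ingredients that first need to be verified are: (i) global well-posedness of (\ref{Ga}) in $H^1(\R)$, which is provided by Kenig-Ponce-Vega \cite{KPV} together with the conservation of mass (\ref{M}) and of the Gardner energy $E_\beta[v] := \frac 12 \int_\R v_x^2 - \frac 13 \int_\R v^3 + \frac \beta 4 \int_\R v^4$; (ii) the spectral structure of the linearized operator $\mathcal L_j := -\partial_x^2 + c_j^0 - 2 Q_{c_j^0,\beta} + 3\beta Q_{c_j^0,\beta}^2$ around each Gardner soliton (a single simple negative eigenvalue, a simple kernel generated by $(Q_{c_j^0,\beta})'$, and otherwise positive spectrum); and (iii) exponential decay of $Q_{c_j^0,\beta}$ and its derivatives at spatial infinity, which is visible from (\ref{SolG}).

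Once these prerequisites are in hand, I would proceed in three classical stages. The first stage is \emph{modulation}: as long as $v(t)$ remains $H^1$-close to a translated sum $S(t) = \sum_j Q_{c_j^0,\beta}(\cdot - \tilde x_j(t))$, the shifts $\tilde x_j(t)$ can be chosen as $C^1$ functions of time via the orthogonality conditions $\int_\R (v(t) - S(t))\,(Q_{c_j^0,\beta})'(\cdot - \tilde x_j(t))\,dx = 0$, and differentiating these in time yields the bound $|\tilde x_j'(t) - c_j^0| \leq C(\|w(t)\|_{H^1(\R)} + e^{-\tilde \ga \tilde L})$ with $w(t) := v(t) - S(t)$. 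The second stage is \emph{monotonicity}: I would introduce smooth transition cut-offs $\psi_j(t,x)$ centered at the midpoints $m_j(t) := \frac 12 (\tilde x_j(t) + \tilde x_{j+1}(t))$ and transported to the left at a speed strictly less than $c_j^0$, and construct $N$ localized almost-conserved functionals
\be
\mathcal F_j(t) := \int_\R \Big[ v_x^2 + c_j^0 v^2 - \tfrac 23 v^3 + \tfrac \beta 2 v^4 \Big](t,x)\,\psi_j(t,x)\,dx.
\ee
Differentiating along the flow of (\ref{Ga}) and exploiting the favorable sign produced by the dispersive transport against the motion of $\psi_j$ yields $\frac{d}{dt} \mathcal F_j(t) \leq C e^{-\tilde \ga \tilde L}$. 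The third stage is \emph{coercivity and bootstrap closure}: a suitable linear combination of the increments $\mathcal F_j(t) - \mathcal F_j(0)$ controls $\|w(t)\|_{H^1(\R)}^2$ from below thanks to Weinstein's spectral coercivity (\ref{WC}) applied block-by-block, while the modulation orthogonality neutralizes the negative direction of each $\mathcal L_j$; closing the bootstrap delivers (\ref{Fnv}).

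The main obstacle, beyond bookkeeping, is to verify (\ref{WC}) and the accompanying spectral coercivity \emph{uniformly in $\beta$ small}, since the Gardner soliton $Q_{c,\beta}$ degenerates both as $\beta \to 0^+$ (toward the KdV profile (\ref{Q})) and as $c$ approaches $\frac{2}{9\beta}$ from below, where $\rho \to 0$ and the profile flattens. The quantity $\partial_c \int_\R Q_{c,\beta}^2$ can be computed in closed form from (\ref{SolG}); in the limit $\beta \to 0$ it reduces to $c^{-1/2}\int_\R Q^2 > 0$, so by a continuity/compactness argument (\ref{WC}) persists for $\beta$ sufficiently small and $c$ in compact subsets of $(0,\tfrac{2}{9\beta})$, which is exactly the range relevant to the application in Theorem \ref{T1}. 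The extra $\beta v^3$ term in (\ref{Ga}) introduces quartic corrections in the almost-conservation identities, but these come with a prefactor of $\beta$ and are absorbed into the error budget without altering the structure of the Martel-Merle-Tsai scheme.
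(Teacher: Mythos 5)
Your proposal follows essentially the same route as the paper: the authors also treat Proposition \ref{P1} as a direct consequence of the Martel--Merle--Tsai scheme \cite{MMT} (and Section 5 of \cite{MMan}) applied to the Gardner nonlinearity, with the only substantive verification being Weinstein's criterium (\ref{WC}), which they check exactly as you do via the small-$\beta$ expansion of $\partial_c\int_\R Q_{c,\beta}^2$. The only slip is your stated $\beta\to 0$ limit: from $Q_c(s)=cQ(\sqrt c s)$ one gets $\partial_c\int_\R Q_c^2=\tfrac32 c^{1/2}\int_\R Q^2$ (as in (\ref{WC1})), not $c^{-1/2}\int_\R Q^2$, though this does not affect the positivity you need.
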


\begin{proof}
Although this proof is not present in the literature, it is a direct consequence of \cite{MMT} (see also Section 5 in \cite{MMan}.) For the proof of (\ref{WC}), note that from (\ref{SolG})
\be\label{WC1}
\partial_c \int_\R Q_{c,\beta}^2 =  \frac 32c^{1/2} \int_\R Q^2 + O(\beta)>0, 
\ee
for $\beta$ small. See also \cite{Ale} for the explicit computation.
\end{proof}

2. Since $v_0$ satisfies (\ref{v0}), by taking $\al_0>0$ smaller and $L_0$ larger if necessary, we can apply the above Proposition with 
\bea
& & \tilde \al := K_0 (\frac{\al}{\sqrt{\beta}} + e^{-\ga_0 L}), \quad \tilde L := \frac 9{10}L, \label{tal}\\
& &  \tilde x_j^0 := x_j^0 +\delta_j, \; j=2,\ldots, N.\nonu
\eea
Therefore, there exist $\tilde A_0>0$, parameters $\tilde x_j(t)\in \R$ and a solution $v(t)$ of (\ref{Ga}), defined for all $t\geq 0$, and satisfying 
\be\label{KA}
\sup_{t\geq 0}\big\| v(t) - \sum_{j=1}^N Q_{c_j^0,\beta}(\cdot - \tilde x_j(t) ) \big\|_{H^1(\R)}\leq \tilde A_0 (\al + e^{-\ga L}),
\ee
for some $\ga>0$ and $\tilde A_0=\tilde A_0(\beta)$ (note that $\tilde L$ and $L$ are of similar size).

\medskip

Now we are ready to prove the first part of Theorem \ref{T1}.

\medskip

3. {\bf $L^2$-stability.} The final steps of the stability proof are similar to those followed in \cite{MV}: Let us define
$$
\bar u(t) := M_\beta [v](t).
$$
with $M_\beta$ given in (\ref{Miura}). Note that 
\ben
\item The initial datum satisfy
$$
\bar u(0) =M_\beta [v](0) = M_\beta[v_0] =u_0 = R_0 +z_0.
$$
\item $\bar u (t)$ is an $L^2$-solution of the KdV equation (\ref{KdV}).
\item From the definition of $M_\beta[v](t)$ and (\ref{Fnv}), one has
\bee
\bar u (t) & = &  M_\beta[S(t) + w(t)] \\
& = & M_\beta [S](t) +  M_\beta[w](t) -3\beta S(t)w(t).
\eee
\een
Let us consider this last term. From (\ref{KA}), one has 
$$
\| M_\beta[w](t) -3\beta S(t)w(t)\|_{L^2(\R)} \leq \tilde A_0 (\al + e^{-\ga L}),
$$
and, similarly to (\ref{MR0}),
\bee
M_\beta [S](t) & = & \sum_{j=1}^N M_\beta[ Q_{c_j^0,\beta}(\cdot - \tilde x_j(t) )]  \\
& & \qquad -\frac 32\beta \sum_{i\neq j}  Q_{c_i^0,\beta}(\cdot - \tilde x_i(t) ) Q_{c_j^0,\beta}(\cdot - \tilde x_j(t) ) \\
& =& \sum_{j=1}^N Q_{c_j^0} (\cdot - \tilde x_j(t) - \delta_j)  + O_{L^2(\R)} (\beta e^{-\ga \tilde L}) \\
& = :& \sum_{j=1}^N Q_{c_j^0} (\cdot - x_j(t))  + O_{L^2(\R)} (\beta e^{-\ga  L}),
\eee
with $x_j(t) := \tilde x_j(t) + \delta_j$. %Note in addition that from (\ref{Fnv}) one has, for $i\neq j$, 
%$$
%\abs{\tilde x_i(t) - \tilde x_j(t)} \geq  \frac 9{10} \int_0^t |c_i-c_j|ds  -KL \geq .
%$$
Therefore, the final conclusion  follows from the uniqueness of $u(t)$, solution of (\ref{KdV}) with initial data $u_0$ \cite{B1}. 

\medskip

4. {\bf Asymptotic stability in $L^2(\R)$.}  Finally, in this paragraph we prove that solitons are asymptotically stable in $L^2(\R)$, in the sense of Martel-Merle, namely estimate (\ref{ASn}). For this purpose, we recall the following result proved in \cite{MMan} (see also Remark 3 in that paper).

\begin{prop}[Asymptotic stability in $H^1(\R)$,  \cite{MMan}]\label{ASMM}~

Suppose that (\ref{Fnv}) holds. Then, there exist $c_j =  c_j(t) \in (0, \frac{2}{9\beta}) $, and $\rho_j(t) \in \R$, $j=1,\ldots , N$, such that 
\be\label{cj}
| c_j(t) -c_j^0 | + |c_j(t) - \rho_j'(t)| \leq K \tilde A_0 (\tilde \al + e^{-\tilde \ga \tilde L}),
\ee
and
\be\label{AS1}
\lim_{t\to +\infty} \big\| v(t)  -  \sum_{j=1}^N Q_{c_j,\beta} (\cdot -\rho_j(t))  \big\|_{H^1(x>\frac {c_1^0}{10} t )} =0.
\ee
Moreover, there exist $c_j^+ \in (0, \frac{2}{9\beta})$ such that $c_j(t)\to c_j^+$  as $t\to +\infty$, and $j=1,\ldots, N$.
\end{prop}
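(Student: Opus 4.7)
The plan is to adapt, step by step, the Martel--Merle asymptotic stability machinery of \cite{MMan} to the Gardner equation \eqref{Ga}. The starting point is the decomposition already furnished by Proposition \ref{P1}: writing $v(t)=\sum_{j=1}^N Q_{c_j^0,\beta}(\cdot-\tilde x_j(t))+w(t)$, we know $w(t)$ is uniformly small in $H^1$ and the $\tilde x_j'(t)$ stay close to $c_j$. First I would refine this into a dynamical modulation $v(t)=\sum_{j=1}^N Q_{c_j(t),\beta}(\cdot-\rho_j(t))+\tilde w(t)$, choosing $(c_j(t),\rho_j(t))$ so that $\tilde w(t)$ is orthogonal (in $L^2$) to $\partial_c Q_{c_j,\beta}(\cdot-\rho_j)$ and $\partial_x Q_{c_j,\beta}(\cdot-\rho_j)$. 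The implicit function theorem, together with the Weinstein condition \eqref{WC1} verified for the Gardner soliton, makes this modulation well-defined and $C^1$, and gives ODEs for $(c_j,\rho_j)$ whose right-hand sides are quadratic in $\tilde w$ up to exponentially small interaction terms.

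The core of the proof is then the \emph{monotonicity} of suitably localized mass functionals. For a smooth, strictly increasing cutoff $\psi$ with $\psi(-\infty)=0$, $\psi(+\infty)=1$, set
\[
\mathcal{I}_j(t):=\int_\R v^2(t,x)\,\psi\bigl(x-m_j(t)\bigr)\,dx,\qquad m_j(t):=\tfrac12(\rho_{j-1}(t)+\rho_j(t)),
\]
with $m_1(t):=\rho_1(t)-\sigma t$ for some $0<\sigma<c_1^0/2$. The local mass identity for \eqref{Ga},
\[
\partial_t(v^2)+\partial_x\bigl(2vv_{xx}-v_x^2+\tfrac{4}{3}v^3-\tfrac{3}{2}\beta v^4\bigr)=0,
\]
combined with the fact that $m_j'(t)$ stays bounded away from the solitons' speeds on either side and the $H^1$-control on $\tilde w$, produces the bound $\mathcal{I}_j'(t)\geq -Ke^{-\gamma L}$ (up to exponentially small errors). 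This is the Gardner analogue of the monotonicity formulas in \cite{MMT,MMan}; the cubic and quartic flux terms are treated by the $H^1$-smallness of $\tilde w$ and the $L^\infty$-boundedness of $v$.

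Having monotonicity, I would use it twice. First, combined with the modulation equations, it forces $c_j(t)$ to be almost nondecreasing up to $O(e^{-\gamma L})$, hence each $c_j(t)$ converges to some $c_j^+\in(0,2/(9\beta))$. The bound \eqref{cj} is then read off from the smallness of the total variation, which is controlled by $\tilde A_0(\tilde\alpha+e^{-\tilde\gamma\tilde L})$. Second, for any sequence $t_n\to+\infty$, the translate $v(t_n+\cdot,\cdot+\rho_N(t_n))$ is bounded in $H^1$ and, by the global well-posedness of \eqref{Ga} in $H^1$, produces a limit solution $\tilde v$ on $\R\times\R$ whose restriction to $\{x\geq c_1^0 t/10\}$ has $L^2$-compact orbit. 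A \emph{Liouville-type rigidity theorem} (again in the spirit of \cite{MMan}, proved via virial-type identities using the coercivity of the linearized Gardner Hamiltonian around $Q_{c,\beta}$, which is again a consequence of \eqref{WC1}) then identifies $\tilde v$ with a pure sum of Gardner solitons. Passing this information back to $v(t)$ via the monotonicity estimates yields the strong $H^1$-convergence \eqref{AS1} on $\{x>c_1^0 t/10\}$.

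The main technical obstacle is precisely this Liouville rigidity step: one must verify that the virial-type quadratic form that appears when linearizing \eqref{Ga} around $Q_{c,\beta}$ is positive definite under the orthogonality conditions imposed by the modulation, with constants uniform for $\beta$ in a neighborhood of $0$. Fortunately, in the regime $\beta\to 0$ the Gardner soliton $Q_{c,\beta}$ degenerates to the KdV soliton $Q_c$ for which this coercivity is the classical fact used in \cite{MMnon}, so a perturbative argument reduces the problem to a spectral continuity statement; everything else is a routine transcription of \cite{MMT,MMan} with the nonlinearity $u^2$ replaced by $u^2-\beta u^3$.
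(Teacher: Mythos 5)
Your proposal is mathematically sound in outline, but it takes a much longer route than the paper does: the paper's ``proof'' of Proposition \ref{ASMM} is essentially a citation. Since the Gardner equation (\ref{Ga}) is a gKdV equation with a $C^\infty$ subcritical-type nonlinearity $v^2-\beta v^3$, the asymptotic stability theorem of Martel--Merle \cite{MMan} (for general nonlinearities, see also Remark 3 there) applies off the shelf once the stability condition $\partial_c\int Q_{c,\beta}^2>0$ is checked, which is exactly the computation (\ref{WC1}); the convergence $c_j(t)\to c_j^+$ is then obtained from the fact that $c\mapsto\int Q_{c,\beta}^2$ is strictly varying, as the paper notes right after the statement. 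What you do instead is re-derive the content of \cite{MMT,MMan} for Gardner: modulation around $\sum_j Q_{c_j(t),\beta}(\cdot-\rho_j(t))$, almost-monotonicity of localized masses, and a Liouville/rigidity theorem proved via virial estimates and coercivity. That is indeed the architecture of the cited proof, so there is no conceptual error, but two points deserve correction. First, the uniformity in $\beta$ you worry about is not needed: $\beta$ is fixed (small) throughout, and all constants are allowed to depend on it (indeed $\tilde A_0=\tilde A_0(\beta)$ in (\ref{Fnv})), so no perturbative spectral-continuity argument as $\beta\to0$ is required -- for fixed $\beta$ the hypotheses of \cite{MMan} are met directly. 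Second, your monotonicity claim has the direction reversed: with $\psi$ increasing and $m_j(t)$ moving rightward with speed strictly between the neighboring soliton speeds, the identity $\partial_t(v^2)+\partial_x\bigl(2vv_{xx}-v_x^2+\tfrac43 v^3-\tfrac32\beta v^4\bigr)=0$ yields $\tfrac{d}{dt}\int v^2\psi(x-m_j(t))\,dx\leq K e^{-\gamma(\cdot)}$, i.e.\ the mass to the right of the line is almost \emph{non-increasing} in forward time (radiation leaks leftward); it is this direction, combined with the strict monotonicity of $c\mapsto\int Q_{c,\beta}^2$, that gives convergence of the localized masses and hence of $c_j(t)$ -- ``almost nondecreasing up to $O(e^{-\gamma L})$'' by itself would not force convergence. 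Finally, the hardest step you flag (the nonlinear Liouville property and the coercivity under the modulation orthogonality conditions) is left as a sketch in your write-up; this is acceptable only because it is precisely what the citation to \cite{MMan} supplies, which is the paper's point in not reproving it.
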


Let us recall that the last conclusion above is a consequence of the fact that the integral $\int_{\R } Q_{c,\beta}^2 $ varies with $c$ (see (\ref{WC1})), which is a sufficient condition to obtain the convergence of the scaling parameters.

\medskip

From the above result we can define
$$
\tilde w(t) := v(t)  -  \sum_{j=1}^N Q_{c_j,\beta} (\cdot -\rho_j(t)),
$$
such that 
\be\label{Limi}
\lim_{t\to +\infty}\|\tilde w(t)\|_{H^1(x\geq \frac 1{10} c_1^0 t)} =0.
\ee
Using the Gardner transform, we know that for $\delta_j(t) := \delta_j (c_j(t))$ (cf. Proposition \ref{P0}),
\bee
u(t) &  = & M_\beta [v](t) \\
& = &  M_\beta[\sum_{j=1}^N Q_{c_j,\beta} (\cdot -\rho_j(t))] + M_\beta[\tilde w](t) -3\beta \tilde w(t) \sum_{j=1}^N Q_{c_j,\beta} (\cdot -\rho_j(t)) \\
& =& \sum_{j=1}^N Q_{c_j} (\cdot -\rho_j(t) +\delta_j(t))  -3\beta \sum_{i\neq j}^N Q_{c_i,\beta} (\cdot -\rho_i(t))Q_{c_j,\beta} (\cdot -\rho_j(t)) \\
& & \qquad + M_\beta[\tilde w](t) -3\beta \tilde w(t) \sum_{j=1}^N Q_{c_j,\beta} (\cdot -\rho_j(t)). 
\eee
Now, it is clear from (\ref{cj}) that 
$$
\lim_{t\to +\infty}\big\| \sum_{i\neq j}^N Q_{c_i,\beta} (\cdot -\rho_i(t))Q_{c_j,\beta} (\cdot -\rho_j(t)) \big\|_{L^2(x>\frac {c_1^0}{10}  t)} =0.
$$
On the other hand, from (\ref{Limi}) one has
$$
\lim_{t\to +\infty}\big\| M_\beta[\tilde w](t) -3\beta \tilde w(t) \sum_{j=1}^N Q_{c_j,\beta} (\cdot -\rho_j(t)) \big\|_{L^2(x>\frac {c_1^0}{10}  t)} =0.
$$
Finally, by redefining $x_j(t) := \rho_j(t) -\delta_j(t)$, using (\ref{cj}) and an argument similar to Step 1 in the proof of Proposition \ref{P0}, we obtain the final conclusion. The proof is complete.

\medskip

\noindent
{\bf Remark.} It is important to stress that the invertibility property above mentioned in Proposition \ref{P0}  depends on $\beta$ small, and it should be present in the main result, namely Theorem \ref{T1}. In fact,  we have chosen $\al_0$ depending on $\beta$ such that $\tilde \al$ in (\ref{tal}) is small enough to apply the stability result for the Gardner equation. Therefore, in (\ref{Fn}) the dependence in $\beta$ is hidden under the constant $A_0$.

\bigskip

\subsection{The case of negative times} One may concern whether the preceding result, valid for positive times, can be extended as in \cite{MS}, for negative times, or even better, for all time. We have a first, positive answer for this question. Indeed, by using a continuity argument inside the \emph{interaction region} and the explicit multi-soliton solution of the KdV equation, one has the following

\medskip

\begin{prop}[$L^2$-stability for negative times]\label{T2}~

Let $\delta>0$ fixed. Under the hypotheses of Theorem \ref{T1}, by taking $\al_0$ smaller and $L_0$ larger if necessary, there exist $\tilde T\geq 0$ and $x_j(t)\in \R$, $j=1,\ldots, N$, defined for all $|t|\geq \tilde T$, and such that 
\be\label{FnG}
\sup_{t\leq -\tilde T}\big\| u(t) - \sum_{j=1}^N Q_{c_j^0}(\cdot -x_j(t))  \big\|_{L^2(\R)}\leq \delta.
\ee
Moreover, as above, the asymptotic stability result (\ref{ASn}) can be extended as $t\to -\infty$, with the obvious modifications.
\end{prop}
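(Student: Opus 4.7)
The plan is to reduce the negative-time statement to Theorem \ref{T1} by combining three ingredients: the explicit $N$-soliton solution $U^{(N)}$ of KdV, which interpolates between the decoupled configurations at $t \to \pm \infty$; the $L^2$-continuous dependence of the KdV flow on bounded time intervals, provided by the Bourgain well-posedness theory \cite{B1}; and the time-reversal symmetry $(t,x) \mapsto (-t,-x)$ of (\ref{KdV}).

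First I would choose parameters $\tilde x_j^0 = x_j^0 + O(e^{-\ga L})$ so that the explicit profile $U^{(N)}(0,\cdot\,; c_j^0, \tilde x_j^0)$ approximates $R_0$ in $L^2(\R)$ with exponentially small error (this is the standard factorization of an $N$-soliton in the well-separated regime, see \cite{MS,HIROTA}). The triangle inequality then gives $\|u_0 - U^{(N)}(0)\|_{L^2(\R)} \leq \al + Ce^{-\ga L}$, and for any $T_\ast > 0$ the $L^2$-continuous dependence yields $\sup_{|t|\leq T_\ast} \|u(t) - U^{(N)}(t)\|_{L^2(\R)} \leq K(T_\ast)(\al + e^{-\ga L})$. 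For $T_\ast$ large, the explicit $N$-soliton further decomposes as $t \to -\infty$ into a sum of well-decoupled KdV solitons with pre-interaction phases $x_j^{-\infty}$, the minimum separation between their centers growing linearly in $T_\ast$ and the spatial ordering reversed with respect to $t=0$ (the fastest soliton now sitting at the leftmost, most negative position). Hence $u(-T_\ast)$ is $L^2$-close to this decoupled sum.

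To apply Theorem \ref{T1} in this reversed configuration I would exploit the time reversal by setting $\tilde u(s,y) := u(-T_\ast - s, -y)$, which solves (\ref{KdV}) in $(s,y)$ by invariance of the equation under $(t,x) \mapsto (-t,-x)$. Since $Q_c$ is even, $\tilde u(0,\cdot)$ is $L^2$-close to a sum of $N$ solitons with centers $y_j^0 := c_j^0 T_\ast - x_j^{-\infty}$ satisfying $y_1^0 < y_2^0 < \ldots < y_N^0$ with minimum separation exceeding the $L_0$ of Theorem \ref{T1}, provided $T_\ast$ is large enough. Choosing $\al_0$ sufficiently small (depending on $T_\ast$) so that the total error at $s=0$ is below the threshold required by Theorem \ref{T1}, one obtains stability and asymptotic stability of $\tilde u(s)$ for $s \geq 0$; this translates into (\ref{FnG}) and into the asymptotic stability as $t \to -\infty$ on the region $x < c_1^0(t + T_\ast)/10$, which is the image under the reflection of the region $y > c_1^0 s / 10$ appearing in Theorem \ref{T1}.

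The main obstacle is the delicate balancing of constants: $T_\ast$ must be large enough so that $U^{(N)}(-T_\ast)$ has already decomposed into a well-separated sum with minimum separation beyond $L_0$, but the constant $K(T_\ast)$ coming from $L^2$-continuous dependence of the KdV flow may grow (possibly exponentially) in $T_\ast$, forcing $\al_0$ and $L_0$ to depend on the chosen $\tilde T = T_\ast$ in a quantitative way. This is why the conclusion is stated with a small $\delta$ rather than an explicit bound linear in $\al$ and $e^{-\ga L}$, as was already the case in the qualitative stability result of Maddocks-Sachs \cite{MS}.
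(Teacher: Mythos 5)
Your proposal is correct and follows essentially the same route as the paper: compare $u$ with the explicit $N$-soliton $U^{(N)}$ via the $L^2$ uniform continuous dependence of the KdV flow on a finite time interval covering the interaction region, note that at a sufficiently negative time the $N$-soliton has decoupled into well-separated solitons in reversed order, and then apply Theorem \ref{T1} backwards in time using the symmetry $u(-t,-x)$ of (\ref{KdV}). The only cosmetic difference is that the paper fixes the ``first interaction time'' $T<0$ and the elastic-collision shifts $x_j^+$, whereas you take a large $T_\ast$ and the pre-interaction phases $x_j^{-\infty}$; the quantitative bookkeeping (constants depending on $T_\ast$, hence $\alpha_0$, $L_0$ adjusted, conclusion only with $\delta$) matches the paper's.
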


\begin{proof}[Proof of Proposition \ref{T2}]~

We use the notation introduced in \cite{MMcol1}. Let  $\delta>0$ fixed, and let 
$$
T := T(x_1^0,\ldots,x_N^0; c_1^0,\ldots, c_N^0) <0
$$
be the first \emph{interaction time} among the solitons. In particular, for $t \leq T$, solitons are well ordered and separated (in terms of their mutual distance  $L$), but with the inverse order compared with case of positive times. Note that this definition depends only on the set $(c_j^0, x_j^0)_{j=1,\ldots,N}$. By taking  $L_0$ larger if necessary, one has from (\ref{In}) and the explicit form of $U^{(N)}$,
$$
\|u_0 - U^{(N)}(\cdot ; c_j^0,-x_j^0) \|_{L^2(\R)} \leq 2\al.
$$
Let $\tilde u(t,x) := U^{(N)}(x ; c_j^0,-(x_j^0+c_j^0 t))$ be the $N$-soliton solution associated to the initial datum $U^{(N)}(x ; c_j^0,-x_j^0)$, \cite{MS}. From the uniform, continuous dependence on the initial datum in $L^2(\R)$ of the KdV equation (cf. \cite{B1}), one has that
$$
\| u(t) - \tilde u(t) \|_{L^2(\R)} \leq \delta,
$$ 
for all $t\in [T, 0]$, provided $\al_0$ is chosen small enough. However, from the definition of $T$ and a computation one has that
$$
\big\| \tilde u(T)  - \sum_{j=1}^N Q_{c_j^0} ( \cdot -x_j^+ -c_j^0 T) \big\|_{L^2(\R)} \leq K e^{-\ga L},
$$
for some $\ga>0$, and where $x_j^+ = x_j^+ ((c_i^0),(x_i^0))$ are the shifts induced by the elastic collision \cite{HIROTA}.  Note that by definition of $T$, each soliton is well ordered and separated for $t\leq T$ (in the inverse sense compared with $t\geq 0$), and therefore $x_j^+ + c_j^0 T + L \leq x_{j-1}^+ + c_{j-1}^0 T $, for all $j=2,\ldots, N$. Therefore, by taking $\al_0$ smaller and $L_0$ larger if necessary, we can apply Theorem \ref{T1} backwards in time (just note that $u(-t,-x)$ is also a solution of KdV) to conclude the proof. 

\end{proof}
\medskip

\noindent
{\bf Remark.} We could have used alternatively the $H^1$-local well-posedness theory given in \cite{KPV3} for the Gardner equation, and then the Gardner transform to obtain a similar result as above.

\bigskip

\subsection{Proof of Corollary \ref{T3}}~

We follow the proof of Corollary 1  in \cite{MMT}. The proof is also similar to the proof of Proposition \ref{T2}. First note that the $N$-soliton behaves as the sum of $N$-soliton as the distance among each soliton diverges. Indeed,
\be\label{solprofile}
\lim_{\inf(y_{j+1}-y_j)\to +\infty}\big\| U^{(N)}( \cdot ; c_j^0,-y_j) -\sum_{j=1}^N Q_{c_j^0}(\cdot -y_j) \big\|_{L^2(\R)} =0.
\ee
Let $\delta>0$ be a small fixed number. For $\gamma_0$, $A_0$, $L_0$ and $\alpha_0$ as in the statement of
Theorem \ref{T1}, let $ \alpha_1<\alpha_0$, $L>L_0$ be such that
$A_0 ( \alpha_1 + e^{-\gamma_0 L}  ) < \frac 12\delta$ and
\be\label{defofL}
\big\| U^{(N)}(\cdot; c_j^0,-y_j) -\sum_{j=1}^N Q_{c_j^0}(\cdot -y_j) \big\|_{L^2(\R)} \leq \frac 12 \delta,
\ee
for $y_{j+1}-y_j>L$. We may suppose $A_0\geq1$.

Now, let $\tilde u(t,x) := U^{(N)}(x ; c_j^0,-(x_j^0+c_j^0 t))$ be the $N$-soliton solution of (\ref{KdV}) with initial datum $U^{(N)}(\cdot ; c_j^0,-x_j^0)$. Let  $\tilde T = \tilde T(\al_1,L)>0$ be such that, for all $t\geq \tilde T$,
\be\label{by}
\big\| \tilde u(t)-\sum_{j=1}^N Q_{c_j^0}(\cdot-(x_j^0+c_j^0 t))\big\|_{L^2(\R)} \leq \frac 12\alpha_1,
\ee
and for all $j$,  $x_{j+1}^0 + c_{j+1}^0 \tilde T \ge x_{j}^0 + c_{j}^0 \tilde T + 2L.$

Therefore, by the uniform continuous dependence of the solution  of (\ref{KdV}) with respect to the initial datum in $L^2(\R)$ (see \cite{B1}), there exists $\alpha>0$ such that if  $\|u(0)-\tilde u (0)\|_{L^2(\R)}\le \alpha$, then for all $t\in [0, \tilde T]$, one has $\| u(t)- \tilde u(t)\|_{L^2(\R)}\leq \frac 12 \alpha_1 < \delta $. In particular, by (\ref{by}), 
$$
\big\|u(\tilde T)-\sum_{j=1}^N Q_{c_j^0}(\cdot-(x_j^0+c_j^0 \tilde T))\big\|_{L^2(\R)} \leq \alpha_1<\al_0.
$$
Thus, by Theorem \ref{T1}, there exist $x_j(t)$, such that for all $t\geq \tilde T$,
$$
\big\| u(t)-\sum_{j=1}^N Q_{c_j^0}(\cdot- x_j(t)) \big\|_{L^2(\R)} \le A_0 ( \alpha_1 + e^{-\gamma_0 L} ) < \frac 12\delta.
$$
Moreover, $x_{j+1}(t)>x_j(t)+L$. Together with (\ref{defofL}), this gives the stability result for positive times $t\geq \tilde T$.

Next, taking $\al$ smaller if necessary, we can use a similar argument to that in the proof of Proposition \ref{T2} to extend the stability result backwards in time, until before the first collision time. Finally, we extend the result for all negative times by using again Theorem \ref{T1}.

\medskip

Finally, the asymptotic stability result follows from (\ref{solprofile}) and the second part of Theorem \ref{T1}. The proof is complete.

\bigskip

\section{Final remarks}\label{5}

\medskip

In this last section we would like to stress some points which are of independent interest.

\medskip

\noindent
{\bf 1. On the relationship between the Miura and Gardner transforms}. There is a second way to see the transform (\ref{Miura}), which uses the standard Miura transform $M$, considered in (\ref{Mi}). This way is probably not new in the literature, so we recall it by completeness purposes. 

Indeed, let $v\in H^1(\R)$ be a solution of (\ref{Ga}). Then, the auxiliary function 
$$
\tilde v(t,x):= \frac{1}{3\sqrt{\beta}} -  \sqrt{\beta} \, v \big(t,x+\frac{t}{3\beta } \big) 
$$
solves the mKdV equation (\ref{dmKdV}). Note that $\tilde v$ is a $L^\infty$-function with nonzero limits at infinity. Next, $M[\tilde v]$ is a solution of (\ref{KdV}), with nonzero limit at infinity. Using the fact that (\ref{KdV}) is \emph{Galilean invariant}, one has that 
$$
u(t,x)=\frac{1}{6\beta} + M[v] \big(t, x -\frac{t}{3\beta}\big),
$$ 
is also an $L^2(\R)$-solution of KdV. Finally, one can easily check that the composition of these applications gives (\ref{Miura}). See e.g. \cite{Ale} for further applications of this transform.

\medskip

\noindent
{\bf 2. The KP II model.}\footnote{In this paragraph we follow the notation of \cite{MT}.} It is also interesting to stress that, modulo a constant transformation on the scaling, the KdV soliton $Q_c(x-ct)$, defined in (\ref{Q}), and seen as a two-variable function of $x$ and $y$, is a non-localized solution of the KP II equation
\bea\label{KPII}
& & u_t + (u_{xx} + 3u^2)_x + 3v_{y} =0, \quad \hbox{ in } \quad \R_t \times \R_{x,y}^2,\\
& & u=u(t,x,y), \quad v:= \partial_x^{-1}u_y. \nonu
\eea
In \cite{MT}, Mizumachi and Tzvetkov follow the idea of Merle and Vega and perform a Miura transform to show that $Q_c$ is stable under small perturbations in the space $L^2(\R_x\times \T_y)$, where here $\T_y$ is the one-dimensional torus in the $y$-variable. The pivot equation is now the integrable model called \emph{modified} KP II equation, which is given by 
$$
u_t + (u_{xx} - 2u^3)_x  + 3 v_{y} + 6u_x v=0, \quad \hbox{ in } \quad \R_t \times \R_{x,y}^2,
$$
with $v$ defined in (\ref{KPII}). Note in addition that, after a standard scaling modification, the one-variable kink solution of the mKdV equation (\ref{dmKdV}) is also an admissible solution of this last equation (seen as a two-variable function).

\medskip

We believe that, using the methods developed in this paper, the stability --under periodic transversal perturbations-- of the KdV multi-soliton $U^{(N)}$, seen as a solution of (\ref{KPII}), constant in the $y$-variable, can be handled via a Gardner transform pointing this time to the \emph{integrable}, Gardner generalization of KP II, namely
\bee
& & \tilde u_t + (\tilde u_{xx}  + 3\tilde u^2 - 2 \tilde u^3)_x + 3\tilde v_{y} +6\tilde u_x \tilde v =0,\quad \hbox{ in } \quad \R_t \times \R_{x,y}^2, \\
 & & \tilde u=\tilde u (t,x,y), \quad  \tilde v:= \partial_x^{-1} \tilde u_y,
\eee
for which a scaled version of the Gardner soliton (\ref{SolG}), seen as a constant function in the $y$-variable, is a simple solution. The Gardner-KP II transform is given in this case by the simple formula
$$
\tilde M[\tilde u] :=  \tilde u + \tilde u_x + \tilde v -\tilde u^2.  
$$

\bigskip

\appendix

\section{Proof of (\ref{QcbQc})}\label{A}

In this small paragraph we prove, for the sake of completeness, that the Gardner transform sends solitons of the Gardner equation towards translated solitons of KdV, namely the identity 
(\ref{QcbQc}). Let us recall that $\rho = (1-\frac 92\beta c)^{1/2}$.  Indeed, note that by (\ref{Miura}) and (\ref{SolG}),
\bee
& & M_\beta [Q_{c,\beta}](t,x) = \\
& & =  \big[Q_{c,\beta} - \frac 32 \sqrt{2\beta} Q_{c,\beta}'  -\frac 32 \beta Q_{c,\beta}^2\big] (x-ct) \\
& & = \Big[Ê\frac{3c}{1+ \rho \cosh(\sqrt{c}s)} +   \frac 92 \sqrt{2\beta}c^{3/2} \rho  \frac{ \sinh(\sqrt{c}s)}{(1+ \rho \cosh(\sqrt{c}s))^2 }  \\
& & \qquad - \frac{27 \beta c^2}{2(1+ \rho \cosh(\sqrt{c}s))^2}\Big]\Big|_{s=x-ct} \\
& & = \frac{3c \rho}{(1+ \rho \cosh(\sqrt{c}s))^2} \big[  \rho +  \cosh(\sqrt{c}s) +   \frac 32 \sqrt{2\beta c} \sinh(\sqrt{c}s)   \big] \Big|_{s=x-ct}.
\eee
Now, let us note that for $\beta>0$ one has $\rho<1$ and therefore $\delta := \frac 1{\sqrt{c}} \cosh^{-1}(\frac 1\rho)>0$ is a well defined quantity, provided we take e.g. the positive inverse of $\cosh$. Note that the shift in the KdV soliton is \emph{always} present since $\beta>0$.  Moreover, with this choice one has
\be\label{SH}
\cosh (\sqrt{c} \delta) = \frac 1\rho, \qquad \sinh(\sqrt{c} \delta) = \frac 1\rho \sqrt{1-\rho^2} = \frac 3{2\rho} \sqrt{2\beta c} >0.
\ee
We replace these identities above, to obtain
\bea
& & M_\beta [Q_{c,\beta}](t,x) =  \nonu\\
& & =  \frac{3c }{\cosh^2(\sqrt{c\delta}) \big[1+  \frac{\cosh(\sqrt{c}s)}{\cosh(\sqrt{c\delta})}  \big]^2} \times \nonu\\
& & \qquad \times \big[  1 + \cosh(\sqrt{c}\delta) \cosh(\sqrt{c}s) +    \sinh (\sqrt{c} \delta) \sinh(\sqrt{c}s)   \big]\Big|_{s=x-ct}  \nonu \\
& & = 3c \frac{ (1 + \cosh(\sqrt{c}\delta) \cosh(\sqrt{c}s) +    \sinh (\sqrt{c} \delta) \sinh(\sqrt{c}s) )}{1+ \sinh^2(\sqrt{c\delta}) +  \cosh^2(\sqrt{c}s) + 2\cosh(\sqrt{c\delta})\cosh(\sqrt{c}s) }\Big|_{s=x-ct}.\nonu \\
& &  \label{marca}
\eea
Note that 
\bea\label{marca2}
& & 1 + \cosh(\sqrt{c}\delta) \cosh(\sqrt{c}(x-ct)) +    \sinh (\sqrt{c} \delta) \sinh(\sqrt{c}(x-ct)) \nonu \\
& & \qquad =1 +\cosh(\sqrt{c}(x-ct +\delta ))>0,
\eea
and
\bee
& & (1 + \cosh(\sqrt{c}\delta) \cosh(\sqrt{c}(x-ct)) +    \sinh (\sqrt{c} \delta) \sinh(\sqrt{c}(x-ct)) ) \times \\
& & \qquad \times  (1 + \cosh(\sqrt{c}\delta) \cosh(\sqrt{c}(x-ct)) -    \sinh (\sqrt{c} \delta) \sinh(\sqrt{c}(x-ct)) ) =\\
& & =  (1 + \cosh(\sqrt{c}\delta) \cosh(\sqrt{c}(x-ct)) )^2 - \sinh^2 (\sqrt{c} \delta) \sinh^2(\sqrt{c}(x-ct)) ) \\
& & = 1 + 2\cosh(\sqrt{c}\delta) \cosh(\sqrt{c}(x-ct)) +  \cosh^2(\sqrt{c}\delta) \cosh^2(\sqrt{c}(x-ct)) \\
& & \qquad - \sinh^2 (\sqrt{c} \delta) \sinh^2(\sqrt{c}(x-ct)) ) \\
& & = 1 + 2\cosh(\sqrt{c}\delta) \cosh(\sqrt{c}(x-ct)) + \cosh^2(\sqrt{c}(x-ct)) +  \sinh^2(\sqrt{c}\delta),  
\eee
which is the denominator in (\ref{marca}). Therefore, from (\ref{marca2}) we can simplify this term to obtain
\bee
& & M_\beta [Q_{c,\beta}](t,x) = \\
& & =  \frac{ 3c }{1 + \cosh(\sqrt{c}\delta) \cosh(\sqrt{c}(x-ct)) -    \sinh (\sqrt{c} \delta) \sinh(\sqrt{c}(x-ct))} \\
& & = \frac{ 3c }{1 + \cosh(\sqrt{c}(x-ct -\delta))} \\
& & = Q_c(x-ct-\delta),
\eee
as desired (cf. (\ref{Sol})-(\ref{Q})). An  a-posteriori analysis shows that the final result is independent of the sign chosen for $\delta$, provided $\sinh (\sqrt{c} \delta)$ is chosen negative in (\ref{SH}).

\medskip

\noindent
{\bf Acknowdlegments.} We would like to thank Yvan Martel and Frank Merle for several and useful comments on a first version of this paper, and Y. Martel for the link with the KP II equation. We also thank the referee for some important remarks which helped to improve the quality of this paper.

\bigskip

\end{document}